\newcommand{\wt}{\widetilde{\omega}}
 \def\a{\alpha}
 \def\b{\beta}
  \def\i{\infty}
 \def\o{\omega}
 \def\t{\tilde}
 \def\th{\theta}
 \def\d{\delta}
 \def\l{\lambda}
 \def\ep{\epsilon}
 \def\m{\hspace{-1mm}}
 \def\be{\begin{eqnarray*}}
 \def\ee{\end{eqnarray*}}
 \newcommand{\N}{{\mathbb N}}
\newtheorem{lemma}{Lemma}[section]
\newtheorem{corollary}[lemma]{Corollary}
\newtheorem{theorem}[lemma]{Theorem}
\def\be  {\begin{equation}}
\def\ee  {\end{equation}}
\def\ba  {\begin{eqnarray}}
\def\ea  {\end{eqnarray}}
\def\baa {\begin{eqnarray*}}
\def\eaa {\end{eqnarray*}}
\def\bc  {\begin{comment}}
\def\ec  {\end{comment}}
\def\qed{\hfill $\Box$\smallskip}
\begin{document}

\title[The largest critical value of $T_n^{(k)}$]{Estimates for the largest critical value of $T_n^{(k)}$}

\author{Nikola Naidenov}
\address{Faculty of Mathematics and Informatics, Sofia University "St. Kliment Ohridski", 5 James Bourchier Blvd.,
1164 Sofia, Bulgaria}
\email{nikola@fmi.uni-sofia.bg}

\author{Geno Nikolov}
\address{Faculty of Mathematics and Informatics, Sofia University "St. Kliment Ohridski", 5 James Bourchier Blvd.,
1164 Sofia, Bulgaria}
\email{geno@fmi.uni-sofia.bg}

\thanks{The authors are partially supported by the Sofia University Research Fund through Contract No. 80-10-20/22.03.2021. }

%********abstract**************************************************
\begin{abstract}
Here we study the quantity
$$
\tau_{n,k}:=\frac{|T_n^{(k)}(\omega_{n,k})|}{T_n^{(k)}(1)}\,,
$$
where $T_n$ is the $n$-th Chebyshev polynomial of the first kind and
$\omega_{n,k}$ is the largest zero of $T_n^{(k+1)}$. Since the 
absolute values of the local extrema of $T_n^{(k)}$
increase monotonically towards the end-points of $[-1,1]$, the value
$\tau_{n,k}$ shows how small is the largest critical value of
$\,T_n^{(k)}\,$ relative to its global maximum $\,T_n^{(k)}(1)$. 
This is a continuation of the recent paper \cite{NNS2018}, where upper 
bounds and asymptotic formuae for $\tau_{n,k}$ have been obtained 
on the basis of Alexei Shadrin's explicit form of the Schaeffer--Duffin pointwise 
majorant for polynomials with absolute value not exceeding $1$ in $[-1,1]$. 

We exploit a result of Knut Petras \cite{KP1996} about the weights of the Gaussian 
quadrature formulae associated with the ultraspherical weight function
$w_{\lambda}(x)=(1-x^2)^{\lambda-1/2}$ to find an explicit  
(modulo $\omega_{n,k}$) formula for $\tau_{n,k}^2$.  This enables us to prove  a
lower bound and to refine the upper bounds for $\tau_{n,k}$ obtained in 
\cite{NNS2018}. The explicit formula admits also a new derivation 
of the assymptotic formula in \cite{NNS2018} approximating $\tau_{n,k}$ for $n\to\infty$. 
The new approach is simpler, without using deep results about the ordinates of 
the Bessel function, and allows to better analyze the sharpness of the estimates. 
\medskip

\noindent
\textbf{Key Words and Phrases:} Derivatives of Chebyshev polynomials, ultra\-spherical polynomials, hypergeometric functions.\medskip

\noindent
\textbf{Mathematics Subject Classification 2020:}  41A17
\end{abstract}

%******************************************************************
\maketitle
\section{Introduction and statement of the results}
We study the quantity
$$
\tau_{n,k}:=\frac{|T_n^{(k)}(\omega)|}{T_n^{(k)}(1)}\,,
$$
where $T_n$ is the $n$-th Chebyshev polynomial of the first kind, and
$\omega=\omega_{n,k}$ is the rightmost zero of $T_n^{(k+1)}$, $n\geq k+2$. The value
$\tau_{n,k}$ shows how small is the largest critical value of
$\,T_n^{(k)}\,$ relative to its global maximum in $[-1,1]$, $\,T_n^{(k)}(1)$.
This quantity has found applications in studying some extremal problems such as 
Markov-type inequalities \cite{ES1942}, \cite{Nik2005}, \cite{AS2004} and 
the Landau--Kolmogorov inequalities for intermediate derivatives \cite{BOE1998}, \cite{AS2014}.

Some upper bounds for $\tau_{n,k}$ have been obtained in the recent paper \cite{NNS2018}. The main ingredient for the results in \cite{NNS2018} is  the pointwise majorant $D_{n,k}(x)$ for polynomials of degree at most $n$ with absolute value less than or equal to one in $[-1,1]$. This majorant was used by Schaeffer and Duffin \cite{SD1938} to obtain another  proof of V. Markov's inequality. An explicit formula for $D_{n,k}^2(x)$, $\,k\geq 2$, was found by Shadrin (see \cite{AS2004}), it reads 
\begin{equation}\label{e1.1}
D_{n,k}^2(x)
=\frac{n^2(n^2-1^2)\cdots(n^2-(k-1)^2)}{(1-x^2)^{k}}\,S_{n,k}(x)\,,
\end{equation}
where 
$$
S_{n,k}(x) =1+\sum_{m=1}^{k-1}\frac{(2m-1)!!}{(2m)!!}
\,\frac{(k-m)_{2m}}{(1-x^2)^{m}}
\prod_{j=1}^{m}\frac{1}{n^2-j^2} 
$$
and $(a)_j:=a(a+1)\ldots(a+j-1)$, $j\in \mathbb{N}$, is the Pocchammmer function. 
This leads to the inequality
\begin{equation}\label{e1.2}
\tau_{n,k}^2\leq\frac{D_{n,k}^2(\omega)}{\big[T_n^{(k)}(1)\big]^2}=
\frac{(2k-1)!!}{(2k)!!}\,\frac{n+k}{n}\,\frac{1}{{n+k\choose n-k}}\,\frac{S_{n,k}(\omega)}
{(1-\omega^2)^k }\,,
\end{equation}
which was the starting point in \cite{NNS2018} for the derivation of upper bounds for 
$\tau_{n,k}^2$ which are uniform in $n$ and $k$.

Of course, one can obtain an explicit formula for $\tau_{n,k}$ from $T_n^{(k)}=2^{k-1}k!\,n\,P_{n-k}^{(k)}$ and the representations of ultraspherical polynomials as hypergeometric functions, e.g., \cite[eqn. 4.7.6)]{GS1975} yields
$$
\tau_{n,k}=-1+\sum_{m=1}^{n-k}(-1)^{m+1}{n-k\choose m}\frac{(n+k)_{m}}{(k+1/2)_m}\Big(\frac{1-\omega}{2}\Big)^{m}\,.
$$
However, this expression is difficult to estimate because of the sign changing summands. 
It turns out that an explicit formula for $\tau_{n,k}^2$ exists, which moreover admits easier  estimation. We prove the following
\begin{theorem}\label{t1.1}
For all $n>k+1$, the quantity $\,\tau_{n,k}^2\,$ admits the representation
\begin{equation}\label{e1.3}
\tau_{n,k}^2=\frac{(2k-1)!!}{(2k)!!}\,\frac{n}{n-k}\,\frac{1}{{n+k\choose n-k}}\,\frac{1}{(1-\omega^2)^k\,S_{n,k+1}(\omega)}\,,
\end{equation}
where
$$
S_{n,k+1}(x) =1+\sum_{m=1}^{k}\frac{(2m-1)!!}{(2m)!!}
\,\frac{(k+1-m)_{2m}}{(1-x^2)^{m}}
\prod_{j=1}^{m}\frac{1}{n^2-j^2}\,.
$$
\end{theorem}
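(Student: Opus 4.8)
\emph{Plan of the proof.}
The idea is to pass to ultraspherical polynomials, to express $\tau_{n,k}^2$ through a single Gauss--Gegenbauer quadrature weight, and then to substitute Petras's explicit formula for that weight. Write $N=n-k$ and $T_n^{(k)}=c_{n,k}\,C_N^{(k)}$, where $C_N^{(k)}$ is the Gegenbauer polynomial orthogonal with respect to $w_k(x)=(1-x^2)^{k-1/2}$; then $\tau_{n,k}^2=[C_N^{(k)}(\omega)]^2/[C_N^{(k)}(1)]^2$, and $\omega$ is the largest zero of $(C_N^{(k)})'$, which is proportional to both $C_{n-k-1}^{(k+1)}$ and $T_n^{(k+1)}$. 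The first step is to record, from the Gegenbauer differential equation $(1-x^2)y''-(2k+1)xy'+(n^2-k^2)y=0$ taken at the critical point $\omega$, the identity
\[
(C_N^{(k)})''(\omega)=-\frac{n^2-k^2}{1-\omega^2}\,C_N^{(k)}(\omega),
\]
which lets us trade second-order data at $\omega$ for the value $C_N^{(k)}(\omega)$ itself.

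The core of the argument is to compute the polynomial integral
\[
I:=\int_{-1}^{1}Q(x)^2\,w_{k+1}(x)\,dx ,\qquad Q(x):=\frac{(C_N^{(k)})'(x)}{x-\omega},\quad w_{k+1}(x):=(1-x^2)\,w_k(x),
\]
in two ways. On the one hand, the $(n-k-1)$-point Gauss--Gegenbauer rule for $w_{k+1}$ has its nodes exactly at the zeros of $(C_N^{(k)})'$, namely $\omega$ and the remaining critical points; since $Q^2$ has degree $2N-4$ it is integrated exactly, and $Q$ vanishes at every node except $\omega$, where $Q(\omega)=(C_N^{(k)})''(\omega)$. Hence $I=\lambda_\omega\,[(C_N^{(k)})''(\omega)]^2$, with $\lambda_\omega$ the Gauss weight at $\omega$. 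On the other hand, writing $I=\int_{-1}^{1}(1-x^2)Q(x)^2 w_k(x)\,dx$ and applying the $N$-point Gauss--Gegenbauer rule for $w_k$, with nodes the zeros $t_1,\dots,t_N$ of $C_N^{(k)}$, gives $I=\sum_{\nu}\mu_\nu(1-t_\nu^2)[(C_N^{(k)})'(t_\nu)]^2(t_\nu-\omega)^{-2}$. By the classical Christoffel-number identity for ultraspherical weights all the products $\mu_\nu(1-t_\nu^2)[(C_N^{(k)})'(t_\nu)]^2$ coincide with one explicit constant $c=c(n,k)$; and differentiating $(C_N^{(k)})'(x)/C_N^{(k)}(x)=\sum_\nu(x-t_\nu)^{-1}$ once and evaluating at $\omega$ via the first step yields $\sum_\nu(t_\nu-\omega)^{-2}=(n^2-k^2)/(1-\omega^2)$. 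Thus $I=c\,(n^2-k^2)/(1-\omega^2)$. Equating the two values of $I$ and eliminating $[(C_N^{(k)})''(\omega)]^2$ by the first step, one obtains
\[
\tau_{n,k}^2=\frac{c\,(1-\omega^2)}{(n^2-k^2)\,[C_N^{(k)}(1)]^2}\cdot\frac{1}{\lambda_\omega}.
\]

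It then remains to make $\lambda_\omega$ explicit, and here I would invoke the result of Petras \cite{KP1996} on Gauss--Gegenbauer weights for $w_\mu(x)=(1-x^2)^{\mu-1/2}$: in the integer case relevant here, specialized to the weight $w_{k+1}$ and the $(n-k-1)$-point rule, it should read $\lambda_\omega=\frac{\pi}{n}(1-\omega^2)^{k+1}S_{n,k+1}(\omega)$, the case $k=0$ being the textbook closed form of the Gauss--Chebyshev-of-the-second-kind weights (with $S_{n,1}\equiv1$). Substituting, the powers of $1-\omega^2$ collapse to $(1-\omega^2)^{-k}$, and the theorem reduces to the arithmetic identity
\[
\frac{c\,n}{\pi\,(n^2-k^2)\,[C_N^{(k)}(1)]^2}=\frac{(2k-1)!!}{(2k)!!}\cdot\frac{n}{n-k}\cdot\frac{1}{\binom{n+k}{n-k}},
\]
which follows from $C_N^{(k)}(1)=\binom{n+k-1}{n-k}$, the explicit value of the Christoffel-number constant $c$, and standard Gamma-function identities (the duplication formula and $(2m-1)!!=(2m)!/(2^m m!)$). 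In this scheme the conceptual steps are short; the principal effort — and the only likely source of slips — is this final reconciliation of normalizing constants, together with pinning down the precise form of Petras's formula and checking that its parameters line up so that the correction factor is exactly $S_{n,k+1}$.
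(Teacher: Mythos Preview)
Your proposal is correct and rests on the same two ingredients as the paper --- Petras's explicit formula for the Gauss--Gegenbauer weight at $\omega$ and the classical expression of that weight in terms of the orthogonal polynomial --- together with one use of the ultraspherical ODE at the critical point. The difference is in packaging: you reach the relation between $\lambda_\omega$ and $[C_N^{(k)}(\omega)]^2$ by computing the auxiliary integral $I$ with two different Gauss rules, invoking the constancy of $\mu_\nu(1-t_\nu^2)[(C_N^{(k)})'(t_\nu)]^2$ and the partial-fraction identity $\sum_\nu(t_\nu-\omega)^{-2}=(n^2-k^2)/(1-\omega^2)$; the paper bypasses all of that by simply quoting Szeg\H{o}'s closed form for the Gauss weight (eqn.~(15.3.2) in \cite{GS1975}),
\[
a_{\nu,n}^{(\lambda)}=\frac{2^{2-2\lambda}\pi\,\Gamma(n+2\lambda)}{\Gamma^2(\lambda)\,\Gamma(n+1)}\cdot\frac{1}{(1-x_\nu^2)\,[P_n^{(\lambda)\prime}(x_\nu)]^2},
\]
and using the ODE once to rewrite $[P_{n-k-1}^{(k+1)\prime}(\omega)]^2$ as a constant times $(1-\omega^2)^{-2}[P_{n-k}^{(k)}(\omega)]^2$, then equating directly with Petras. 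Your detour is a self-contained re-derivation of this Szeg\H{o} identity at the single node $\omega$; it is valid and perhaps more elementary, but the paper's route is noticeably shorter and leaves less room for constant-chasing errors.
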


We derive Theorem~\ref{t1.1} from a result of Knut Petras \cite{KP1996}, who has found explicit expressions for the coefficients of the Gaussian quadrature formulae associated with the ultraspherical weight function $w_{\lambda}(x)=(1-x^2)^{\lambda-1/2}$ when $\lambda$ in a non-negative integer.

The similarity of formulae \eqref{e1.2} and \eqref{e1.3} is remarkable, as well as the opposite roles
played by the quantities  $S_{n,k}(\omega)$ and $S_{n,k+1}(\omega)$ therein. As \eqref{e1.3} is the exact expression while in \eqref{e1.2} we have a majorant for $\tau_{n,k}^2$, it is natural to expect that with  \eqref{e1.3} one could produce  better upper bounds than the bounds obtained with \eqref{e1.2}. This however requires a lower estimate for $S_{n,k+1}(\omega)$, exactly as upper estimates for $S_{n,k}(\omega)$ are needed in \eqref{e1.2}. In fact, one can avoid estimation of $S_{n,k}(\omega)$ and 
$S_{n,k+1}(\omega)$  by simply combining \eqref{e1.2} and \eqref{e1.3} to obtain 
\begin{equation}\label{e1.4}
\tau_{n,k}^2\leq
\sqrt{\frac{n+k}{n-k}}\,\frac{(2k-1)!!}{(2k)!!}\frac{1}{{n+k\choose
n-k}}\,\frac{1}{(1-\omega^2)^{k}}\,\sqrt{\frac{S_{n,k}(\omega)}{S_{n,k+1}(\omega)}} 
\end{equation}
and then use the inequality $S_{n,k}(\omega)/S_{n,k+1}(\omega)<1$, which is easily verified by a termwise comparison. By elaborating further this idea, we prove the following sharper result.
\begin{theorem}\label{t1.2}
For all $n>k+1$, there holds
\begin{equation}\label{e1.5}
\tau_{n,k}\leq \frac{(2k-1)!!}{(n+k-1)(n+k-3)\ldots(n-k+1)}\,\frac{1}{(1-\omega^2)^{\frac{k}{2}}}\,.
\end{equation}
\end{theorem}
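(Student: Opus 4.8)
The plan is to start from the two expressions for $\tau_{n,k}^2$ that are already available: the exact formula \eqref{e1.3} and the Shadrin majorant \eqref{e1.2}, combined into \eqref{e1.4}. The target bound \eqref{e1.5} can be rewritten as
\[
\tau_{n,k}^2\le \frac{(2k-1)!!^2}{[(n+k-1)(n+k-3)\cdots(n-k+1)]^2}\,\frac{1}{(1-\omega^2)^k}\,,
\]
so comparing with \eqref{e1.4} we see that it suffices to show
\[
\sqrt{\frac{n+k}{n-k}}\,\frac{(2k)!!}{{n+k\choose n-k}}\cdot
\frac{[(n+k-1)(n+k-3)\cdots(n-k+1)]^2}{(2k-1)!!}\cdot\sqrt{\frac{S_{n,k}(\omega)}{S_{n,k+1}(\omega)}}\le 1\,.
\]
Since $S_{n,k}(\omega)/S_{n,k+1}(\omega)<1$ by the termwise comparison mentioned in the excerpt, the bound would follow once I verify the purely combinatorial inequality obtained by dropping that square root factor. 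So the first step is to reduce \eqref{e1.5} to an inequality among products of integers (binomials, double factorials, and the product $(n+k-1)(n+k-3)\cdots(n-k+1)$ which has $k$ factors).

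Second, I would simplify that combinatorial inequality. Write ${n+k\choose n-k}=\frac{(n+k)!}{(n-k)!\,(2k)!}$ and note $(2k)!=(2k)!!\,(2k-1)!!$, so $\frac{(2k)!!}{{n+k\choose n-k}}=\frac{(2k)!!^2(2k-1)!!\,(n-k)!}{(n+k)!}$. The factor $(n+k)!/(n-k)!$ telescopes to the product $(n+k)(n+k-1)\cdots(n-k+1)$, which has $2k$ terms; pairing these $2k$ consecutive integers as (even-indexed)$\times$(odd-indexed) should let $[(n+k-1)(n+k-3)\cdots(n-k+1)]^2$ cancel the odd-indexed half (squared against one of the two copies), leaving a clean expression. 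I expect everything to collapse to something like: the required inequality is equivalent to
\[
\frac{(2k)!!^2}{(n+k)(n+k-2)\cdots(n-k+2)}\cdot\sqrt{\frac{n+k}{n-k}}\le (n+k-1)(n+k-3)\cdots(n-k+1)\cdot\sqrt{\frac{S_{n,k+1}}{S_{n,k}}}\,,
\]
or, after squaring and cancelling, an elementary statement about consecutive integers. The route is: express the product $(n+k-1)(n+k-3)\cdots(n-k+1)$ as $\prod_{j}(n-(k-2j-1))$ and compare it, factor by factor and using that $2j<2j$... — in any case, reduce to checking $\prod(\text{something})\ge\prod(\text{something smaller})$ by AM–GM-type pairing of terms symmetric about $n$.

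Third, and this is where I expect the real work, I would handle the residual ratio $S_{n,k}(\omega)/S_{n,k+1}(\omega)$ more carefully than merely bounding it by $1$, because a crude bound of $1$ may lose too much and prevent the combinatorial inequality from closing. The honest termwise comparison gives
\[
S_{n,k+1}(\omega)-S_{n,k}(\omega)=\sum_{m=1}^{k}\frac{(2m-1)!!}{(2m)!!}\,\frac{(k+1-m)_{2m}-(k-m)_{2m}}{(1-\omega^2)^m}\prod_{j=1}^m\frac1{n^2-j^2}+\frac{(2k-1)!!}{(2k)!!}\frac{(1)_{2k}}{(1-\omega^2)^k}\prod_{j=1}^k\frac1{n^2-j^2}\,,
\]
and since $(k+1-m)_{2m}>(k-m)_{2m}$ every summand is positive; I would quantify by how much, possibly isolating the dominant $m=k$ term. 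The main obstacle is thus the interplay between the loss in $S_{n,k}/S_{n,k+1}<1$ and the slack in the integer inequality — I anticipate that the clean statement \eqref{e1.5} works precisely because the two effects are exactly balanced (the shift $k\mapsto k+1$ in the denominator $S$ and the shift in the integer product match), so the proof should conclude by observing that after substituting \eqref{e1.3} directly (not \eqref{e1.4}), the desired bound is \emph{equivalent} to $S_{n,k+1}(\omega)\ge$ some explicit product, which in turn follows from keeping only finitely many terms of $S_{n,k+1}$ and a direct estimate of those against $(1-\omega^2)^{-m}$ using $1-\omega^2\le 1$. In short: substitute \eqref{e1.3}, clear denominators, and reduce \eqref{e1.5} to a lower bound for $S_{n,k+1}(\omega)$ that is provable by retaining the constant term $1$ together with a partial sum, and then verifying the resulting inequality among rational functions of $n$ (with $1-\omega^2$ bounded above by $1$) by elementary factor-by-factor comparison.
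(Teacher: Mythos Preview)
Your suspicion that the crude bound $S_{n,k}(\omega)/S_{n,k+1}(\omega)<1$ loses too much is correct, and you should not have spent effort on the ``purely combinatorial'' reduction: already for $k=1$, $n=3$ the inequality you would need,
\[
\sqrt{\frac{n+k}{n-k}}\,\frac{(2k-1)!!}{(2k)!!}\,\frac{1}{\binom{n+k}{n-k}}\ \le\ \frac{(2k-1)!!^2}{[(n+k-1)(n+k-3)\cdots(n-k+1)]^2},
\]
reads $\sqrt{2}/12\le 1/9$ and is false. So one genuinely must estimate the $S$-ratio, and your subsequent outline is where the real gap lies.

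Your final plan---substitute \eqref{e1.3} and reduce \eqref{e1.5} to a lower bound on $S_{n,k+1}(\omega)$---is in fact the right route, and the obvious monotonicity $S_{n,k+1}(\omega)\ge S_{n,k+1}(0)$ (all summands increase with $(1-x^2)^{-1}$) is \emph{exactly} sharp enough: a short computation shows that \eqref{e1.5} is equivalent to
\[
S_{n,k+1}(\omega)\ \ge\ \frac{n}{n-k}\cdot\frac{(n+k-1)(n+k-3)\cdots(n-k+1)}{(n+k)(n+k-2)\cdots(n-k+2)},
\]
and the right-hand side is \emph{precisely} the value $S_{n,k+1}(0)$. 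But this is where your proposal breaks down: you cannot obtain this value by ``retaining the constant term $1$ together with a partial sum'' and ``elementary factor-by-factor comparison''. The identity $S_{n,k+1}(0)=n\prod_j(n+k+1-2j)/\prod_j(n+k+2-2j)$ is a terminating ${}_3F_2(1)$ evaluation (Whipple's identity), not a termwise inequality; keeping fewer than all $k+1$ terms gives a strictly smaller number, and there is no slack left to absorb. The paper proceeds along the same lines, but via \eqref{e1.4}: it proves a small lemma on ratios of polynomials with positive coefficients to show that $S_{n,k}(x)/S_{n,k+1}(x)$ is decreasing on $[0,1)$, hence bounded by its value at $0$, and then evaluates $S_{n,k}(0)$ and $S_{n,k+1}(0)$ in closed form via Whipple. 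Either way, the missing ingredient in your plan is the closed-form evaluation of $S_{n,k+1}(0)$; without it the argument does not close.
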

Since $\displaystyle{1-\o^2\geq \Big(\frac{k+2}{n}}\Big)^2\,$ (see, e.g.,  \cite[Lemma 3.5]{Nik2005} for an estimate for the largest zeros of ultraspherical polynomials), Theorem~\ref{t1.2} implies:
\begin{corollary}\label{c1.1}
For all $n>k+1$, there holds
\begin{equation*}
\tau_{n,k}\leq \frac{(2k-1)!!}{(n+k-1)(n+k-3)\ldots(n-k+1)}\,\Big(\frac{n}{k+2}\Big)^{k}\,.
\end{equation*}
\end{corollary}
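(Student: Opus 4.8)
Corollary~\ref{c1.1} is immediate: by \cite[Lemma 3.5]{Nik2005} the largest zero $\omega=\omega_{n,k}$ of $T_n^{(k+1)}$ satisfies $1-\omega^2\ge\big(\frac{k+2}{n}\big)^2$, hence $(1-\omega^2)^{-k/2}\le\big(\frac{n}{k+2}\big)^k$, and inserting this into \eqref{e1.5} gives the claim. So the substance lies in Theorem~\ref{t1.2}; the plan below proves it, and the corollary then follows as above.

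I would prove Theorem~\ref{t1.2} directly from the exact formula \eqref{e1.3}, not from the combination \eqref{e1.4}. Put $Q_{n,k}:=(n+k-1)(n+k-3)\cdots(n-k+1)$ and $R_{n,k}:=(n+k)(n+k-2)\cdots(n-k+2)$ (each with $k$ factors, so that $Q_{n,k}R_{n,k}=\frac{(n+k)!}{(n-k)!}=(2k)!\binom{n+k}{n-k}$). Rearranging \eqref{e1.3} and cancelling the common positive factor $(1-\omega^2)^{-k}$, one checks that \eqref{e1.5} is \emph{equivalent} to
\[
S_{n,k+1}(\omega)\ \ge\ L_{n,k}:=\frac{n\,Q_{n,k}}{(n-k)\,R_{n,k}}\,.
\]
The key elementary observation is that, as a function of $t:=(1-\omega^2)^{-1}$, the polynomial $S_{n,k+1}(\omega)=\sum_{m=0}^{k}\beta_m\,t^m$ has \emph{non-negative} coefficients $\beta_m$, while $t\ge 1$ since $|\omega|<1$; therefore $S_{n,k+1}(\omega)\ge S_{n,k+1}(0)$, and it suffices to prove the $\omega$-free identity $S_{n,k+1}(0)=L_{n,k}$ for all $n>k+1$.

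This identity is the crux. Using $\prod_{j=1}^m(n^2-j^2)^{-1}=\frac{(n-1-m)!\,n!}{(n-1)!\,(n+m)!}$, $(k+1-m)_{2m}=\frac{(k+m)!}{(k-m)!}$ and $\frac{(2m-1)!!}{(2m)!!}=\frac{(1/2)_m}{m!}$, the sum $S_{n,k+1}(0)$ rewrites as the terminating series ${}_3F_2\!\left(\begin{smallmatrix}1/2,\,k+1,\,-k\\ 1-n,\,n+1\end{smallmatrix};1\right)$, and the claim is that it equals $L_{n,k}$. I would establish this either by a hypergeometric transformation (the series is not balanced/Saalsch\"utzian as written, so one first applies a Thomae-type transformation to bring it to balanced form and then Pfaff--Saalsch\"utz), or by induction on $k$, exploiting that the $m$-th terms of $S_{n,k+1}(0)$ and $S_{n,k}(0)$ differ precisely by the factor $\frac{(k+1-m)_{2m}}{(k-m)_{2m}}=\frac{k+m}{k-m}$, which yields a recursion for the partial sums. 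This last step is the main obstacle: the closed form has to come out \emph{exactly}, because there is no slack anywhere — at $k=n-2$ one has $\omega_{n,n-2}=0$ (so $t=1$) and $\tau_{n,n-2}=\frac{1}{2n-3}=\frac{(2n-5)!!}{(2n-3)!!}$, i.e.\ \eqref{e1.5} holds there with equality, which forces $S_{n,k+1}(0)=L_{n,k}$ on the nose. Granting the identity, Theorem~\ref{t1.2}, and then Corollary~\ref{c1.1}, follow as explained.
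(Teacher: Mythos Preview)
Your derivation of Corollary~\ref{c1.1} from Theorem~\ref{t1.2} is exactly the paper's one-line argument. Your route to Theorem~\ref{t1.2}, however, is genuinely different and in fact cleaner than the paper's. The paper proves \eqref{e1.5} via the combined bound \eqref{e1.4}, which requires the Schaeffer--Duffin majorant \eqref{e1.2} \emph{and} Lemma~\ref{l3.1} (monotonicity of the ratio $S_{n,k}/S_{n,k+1}$), together with the closed forms of \emph{both} $S_{n,k}(0)$ and $S_{n,k+1}(0)$. You bypass \eqref{e1.2} and Lemma~\ref{l3.1} entirely: from \eqref{e1.3} alone, \eqref{e1.5} is equivalent to $S_{n,k+1}(\omega)\ge L_{n,k}$, and since $S_{n,k+1}$ has nonnegative coefficients in $t=(1-\omega^2)^{-1}\ge 1$, the trivial inequality $S_{n,k+1}(\omega)\ge S_{n,k+1}(0)$ reduces everything to the single identity $S_{n,k+1}(0)=L_{n,k}$. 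That identity is precisely the paper's \eqref{e3.2}, and your observation that \eqref{e1.5} is an equality at $k=n-2$ (where $\omega=0$) confirms that no slack is lost. The one place the paper is more efficient is in evaluating $S_{n,k+1}(0)$: rather than Thomae plus Pfaff--Saalsch\"utz or an induction, the ${}_3F_2(\tfrac12,k+1,-k;1-n,1+n;1)$ fits Whipple's identity ${}_3F_2(a,1-a,c;d,2c+1-d;1)$ directly (take $a=k+1$, $c=\tfrac12$, $d=1+n$), which gives the closed form in one stroke.
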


We proved in \cite[Theorem 1.1]{NNS2018})  that the sequence $\{\tau_{n,k}\}_{n>k+1}$ is monotonically decreasing, thus showing the existence of 
$\,\tau_k^{*}=\lim_{n\to\infty}\tau_{n,k}$\,. Corollary~\ref{c1.1} implies immediately an upper bound for $\tau_{k}^{*}$.
\begin{corollary}\label{c1.2}
For all $k\in\mathbb{N}$, the quantity $\tau_k^{*}$ satisfies the inequality
\begin{equation*}
\tau_{k}^{*}\leq \frac{(2k-1)!!}{(k+2)^{k}}\,.
\end{equation*}
\end{corollary}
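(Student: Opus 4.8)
The plan is to deduce Corollary~\ref{c1.2} by letting $n\to\infty$ in the bound of Corollary~\ref{c1.1}. Recall from \cite[Theorem 1.1]{NNS2018} that the sequence $\{\tau_{n,k}\}_{n>k+1}$ is monotonically decreasing, so the limit $\tau_k^{*}=\lim_{n\to\infty}\tau_{n,k}$ indeed exists, and this is the quantity we must estimate. Thus it suffices to compute the limit of the right-hand side of Corollary~\ref{c1.1} and invoke the fact that non-strict inequalities are preserved under passage to the limit.

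First I would put the denominator in Corollary~\ref{c1.1} into a form suitable for taking limits. The factors $(n+k-1),(n+k-3),\dots,(n-k+1)$ form an arithmetic progression with common difference $-2$, running from $n+k-1$ down to $n-k+1$; since $(n+k-1)-(n-k+1)=2k-2$, there are exactly $k$ of them, so
\begin{equation*}
(n+k-1)(n+k-3)\cdots(n-k+1)=\prod_{j=0}^{k-1}\bigl(n+k-1-2j\bigr),
\end{equation*}
which is a monic polynomial in $n$ of degree $k$. Hence $\dfrac{n^{k}}{\prod_{j=0}^{k-1}(n+k-1-2j)}\longrightarrow 1$ as $n\to\infty$. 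Substituting this into Corollary~\ref{c1.1}, for every $n>k+1$ we have
\begin{equation*}
\tau_{n,k}\leq \frac{(2k-1)!!}{(k+2)^{k}}\cdot\frac{n^{k}}{\prod_{j=0}^{k-1}(n+k-1-2j)},
\end{equation*}
and letting $n\to\infty$ on both sides, using $\tau_{n,k}\to\tau_k^{*}$ and the limit just computed, gives $\tau_k^{*}\leq (2k-1)!!/(k+2)^{k}$.

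There is no genuine obstacle here: the argument is a one-line limit once Corollary~\ref{c1.1} is available. The only point demanding a little care is the bookkeeping in the denominator --- checking that the descending product $(n+k-1)(n+k-3)\cdots(n-k+1)$ contains exactly $k$ factors, so that it is matched degree-for-degree by the factor $n^{k}$ arising from $\bigl(n/(k+2)\bigr)^{k}$; if this count were off by one, the limit would collapse to $0$ or blow up to $+\infty$ instead of producing the stated constant.
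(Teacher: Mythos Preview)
Your proof is correct and follows exactly the approach the paper indicates: the paper simply states that Corollary~\ref{c1.2} follows ``immediately'' from Corollary~\ref{c1.1}, and your argument spells out that immediate step by counting the $k$ factors in the denominator and passing to the limit $n\to\infty$.
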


The following counterpart of the inequality $\,\displaystyle{\frac{\tau_{n,k}}{\tau_{n+1,k}}>1}\,$  holds true:
\begin{theorem}\label{t1.2'}
For all $n>k+1$, 
\begin{equation}\label{e1.6'}
\frac{\tau_{n,k}}{\tau_{n+1,k}}\leq \frac{(n+k)(n+k-2)\ldots(n-k+2)}{(n+k-1)(n+k-3)\ldots(n-k+1)}\,.
\end{equation}
\end{theorem}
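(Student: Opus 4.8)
The plan is to reduce the ratio $\tau_{n,k}/\tau_{n+1,k}$ to a quotient involving the two explicit expressions supplied by Theorem~\ref{t1.1}, and then to bound the resulting $S$-factor ratio by $1$ using a termwise comparison of the same flavour as the one invoked for \eqref{e1.4}. Concretely, write $\omega=\omega_{n,k}$ and $\omega'=\omega_{n+1,k}$ for the two relevant rightmost zeros. Applying \eqref{e1.3} to each index gives
\[
\frac{\tau_{n,k}^2}{\tau_{n+1,k}^2}
=\frac{\dfrac{n}{n-k}\,\dfrac{1}{\binom{n+k}{n-k}}\,\dfrac{1}{(1-\omega^2)^k\,S_{n,k+1}(\omega)}}
{\dfrac{n+1}{n+1-k}\,\dfrac{1}{\binom{n+1+k}{n+1-k}}\,\dfrac{1}{(1-\omega'^2)^k\,S_{n+1,k+1}(\omega')}}\,,
\]
so that the task splits into three pieces: the ratio of binomial/linear prefactors, the ratio $\big((1-\omega'^2)/(1-\omega^2)\big)^k$, and the ratio $S_{n+1,k+1}(\omega')/S_{n,k+1}(\omega)$. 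The first piece is an explicit rational function of $n$ and $k$ which should collapse, after cancellation in the Pochhammer/factorial expansions, into precisely the product of odd-versus-even step factors appearing on the right-hand side of \eqref{e1.6'} (or its square); I would carry out that cancellation directly.

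For the second piece I would use the monotonicity in $n$ of the rightmost zero: since $T_n^{(k+1)}$ interlaces with $T_{n+1}^{(k+1)}$, one has $\omega_{n,k}<\omega_{n+1,k}$, hence $1-\omega'^2<1-\omega^2$ and the factor $\big((1-\omega'^2)/(1-\omega^2)\big)^k<1$. This is where a small technical check is needed — that the largest zeros of successive ultraspherical polynomials $P_{n-k}^{(k+1/2)}$ are strictly increasing in $n$ — but this is a standard interlacing fact and can be cited or given in one line. For the third piece, I want $S_{n+1,k+1}(\omega')\ge S_{n,k+1}(\omega)$, i.e. the $S$-ratio is $\ge 1$ so it helps the inequality in the right direction. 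Comparing the two series term by term: the $m$-th term of $S_{n,k+1}(x)$ is $\frac{(2m-1)!!}{(2m)!!}(k+1-m)_{2m}(1-x^2)^{-m}\prod_{j=1}^m(n^2-j^2)^{-1}$; replacing $n$ by $n+1$ decreases each $(n^2-j^2)^{-1}$, which pushes the term down, but replacing $\omega$ by the larger $\omega'$ decreases $(1-x^2)$ and hence increases $(1-x^2)^{-m}$, which pushes the term up. So the two monotonicities compete, and a crude termwise comparison does not immediately settle it.

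The main obstacle will therefore be controlling that third ratio: I do not expect a naive termwise bound to work, and I anticipate that one must instead avoid estimating $S_{n+1,k+1}(\omega')$ and $S_{n,k+1}(\omega)$ separately altogether. The cleaner route — and the one I would pursue — is to combine Theorem~\ref{t1.1} for index $n$ with the \emph{upper} bound \eqref{e1.2} (Shadrin's majorant) for index $n+1$, exactly in the spirit of how \eqref{e1.2} and \eqref{e1.3} were combined to get \eqref{e1.4}. That is, write $\tau_{n,k}^2$ via \eqref{e1.3} and $\tau_{n+1,k}^2$ via the *reciprocal* of the lower information contained in \eqref{e1.2}; more precisely, bound $\tau_{n,k}/\tau_{n+1,k}$ by pairing the exact formula in the numerator against the majorant in the denominator, which leaves a ratio of the form $\sqrt{S_{n,k}(\omega')/S_{n,k+1}(\omega)}$ (or similar) that is then disposed of by the elementary inequality $S_{n,k}<S_{n,k+1}$ already used for \eqref{e1.4}, together with the zero-monotonicity $\omega<\omega'$ to align the $(1-x^2)$ arguments favourably. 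The prefactor bookkeeping then produces exactly the right-hand side of \eqref{e1.6'}, and the proof concludes; the delicate point throughout is choosing \emph{which} index gets the exact formula and which gets the majorant so that both the $(1-\omega^2)$-discrepancy and the $S$-factor discrepancy point the same way.
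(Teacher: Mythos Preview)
Your proposal has a genuine gap in two places, and both stem from the same misconception: the right-hand side of \eqref{e1.6'} does \emph{not} come from the binomial/linear prefactor alone.

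First, after applying \eqref{e1.3} at $n$ and $n+1$ the prefactor simplifies to
\[
\frac{n}{n-k}\cdot\frac{n+1-k}{n+1}\cdot\frac{\binom{n+1+k}{n+1-k}}{\binom{n+k}{n-k}}
=\frac{n(n+1+k)}{(n+1)(n-k)}\,,
\]
which is \emph{not} the square of the product in \eqref{e1.6'} (check $k=1$: the prefactor is $n(n+2)/((n-1)(n+1))$, whereas the target squared is $(n+1)^2/n^2$). So even if the remaining $S$-and-$(1-\omega^2)$ ratio were $\le 1$, you would obtain a different (and for small $n$ strictly weaker) bound. Second, you want an \emph{upper} bound on the $S$-ratio $S_{n+1,k+1}(\omega')/S_{n,k+1}(\omega)$, not a lower one; and your fallback of pairing the exact formula for $\tau_{n,k}$ with the majorant \eqref{e1.2} for $\tau_{n+1,k}$ places an \emph{upper} bound in the denominator, which yields a lower bound on the ratio --- the wrong direction.

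What the paper actually does resolves your ``competing monotonicities'' obstacle at a stroke: one observes that $(1-x^2)^k S_{n+1,k+1}(x)$ is a polynomial in $1-x^2$ with positive coefficients, so $(1-\omega'^2)^kS_{n+1,k+1}(\omega')\le (1-\omega^2)^kS_{n+1,k+1}(\omega)$ follows immediately from $1-\omega'^2<1-\omega^2$. This replaces $\omega'$ by $\omega$ everywhere and leaves the ratio $S_{n+1,k+1}(\omega)/S_{n,k+1}(\omega)$ at the \emph{same} argument. That ratio, viewed as a ratio of polynomials in $z=1-\omega^2$, has coefficient quotients $a_{k-m}/b_{k-m}=\prod_{j=1}^m (n^2-j^2)/((n+1)^2-j^2)$ that are monotone in $m$, so by Lemma~\ref{l3.1} it is increasing on $(0,1]$ and bounded by its value at $z=1$, i.e.\ by $S_{n+1,k+1}(0)/S_{n,k+1}(0)$. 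The final --- and essential --- ingredient is the closed form \eqref{e3.2} for $S_{m,k+1}(0)$, obtained via Whipple's ${}_3F_2$ identity; it is precisely the product $\frac{(n+1)(n-k)}{n(n+k+1)}\cdot\big(\text{RHS of \eqref{e1.6'}}\big)^2$ arising from that evaluation that cancels the prefactor and produces the stated bound. Without the Whipple evaluation you cannot land on \eqref{e1.6'}.
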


By iterating \eqref{e1.6'} and using that  $\tau_{k+2,k}=1/(2k+1)$ (see \cite[Theorem 1.2]{NNS2018}),  we obtain that  the  right-hand side of \eqref{e1.5} without the factor $(1-\omega^2)^{-k/2}$ is a lower bound for $\tau_{n,k}$. 

\begin{corollary}\label{c1.3}
For all $n>k+1$, there holds
\begin{equation}\label{e1.6}
\tau_{n,k}\geq \frac{(2k-1)!!}{(n+k-1)(n+k-3)\ldots(n-k+1)}\,.
\end{equation}
\end{corollary}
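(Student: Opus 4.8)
The plan is to obtain \eqref{e1.6} by iterating the ratio estimate \eqref{e1.6'} of Theorem~\ref{t1.2'} downwards until one reaches the known anchor value $\tau_{k+2,k}=1/(2k+1)$ of \cite[Theorem 1.2]{NNS2018}, the resulting product collapsing telescopically. To keep the bookkeeping transparent I would set
\[
B_n:=(n+k-1)(n+k-3)\cdots(n-k+1),\qquad A_n:=(n+k)(n+k-2)\cdots(n-k+2),
\]
each being a product of $k$ consecutive integers of the same parity, so that \eqref{e1.6'} reads $\tau_{n,k}/\tau_{n+1,k}\leq A_n/B_n$ for every $n\geq k+2$, i.e.\ $\tau_{n+1,k}\geq \tau_{n,k}\,B_n/A_n$.

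The one observation that makes everything telescope is that shifting the index in $B$ produces $B_{n+1}=A_n$: the factors $n+k-1,\,n+k-3,\,\dots,\,n-k+1$ turn into $n+k,\,n+k-2,\,\dots,\,n-k+2$. Applying $\tau_{m+1,k}\geq\tau_{m,k}\,B_m/A_m$ for $m=k+2,k+3,\dots,n-1$ and multiplying then gives
\[
\tau_{n,k}\ \geq\ \tau_{k+2,k}\prod_{m=k+2}^{n-1}\frac{B_m}{A_m}\ =\ \tau_{k+2,k}\prod_{m=k+2}^{n-1}\frac{B_m}{B_{m+1}}\ =\ \tau_{k+2,k}\,\frac{B_{k+2}}{B_n}\,.
\]
Since $B_{k+2}=3\cdot 5\cdots(2k+1)=(2k+1)!!$, $\tau_{k+2,k}=1/(2k+1)$, and $(2k+1)!!/(2k+1)=(2k-1)!!$, the right-hand side equals $(2k-1)!!/B_n$, which is precisely the bound in \eqref{e1.6}.

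I do not expect a genuine obstacle here; the proof is just a telescoping product, and the only matters requiring care are the parity and index bookkeeping in $B_n$ and $A_n$, the identity $A_n=B_{n+1}$, and the fact that \eqref{e1.6'} is available on the entire range $n\geq k+2$ needed for the iteration (with an empty product, hence a trivial inequality, when $n=k+2$). As a by-product one sees that \eqref{e1.6} is an equality at $n=k+2$, where both sides equal $1/(2k+1)$, so the lower bound is attained at the left endpoint of the range.
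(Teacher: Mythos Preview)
Your proof is correct and follows exactly the approach indicated in the paper: iterate the ratio bound \eqref{e1.6'} of Theorem~\ref{t1.2'} from the anchor value $\tau_{k+2,k}=1/(2k+1)$, letting the resulting product telescope via $A_n=B_{n+1}$. The paper only sketches this in one sentence, while you supply the bookkeeping in full; your observation that equality holds at $n=k+2$ is a pleasant bonus.
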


We observe that the bounds for $\tau_{n,k}$  given by Corollaries \ref{c1.1} and \ref{c1.3} present correctly the magnitude of $\tau_{n,k}$ whenever $\Big(\dfrac{n}{k+2}\Big)^{k}$ remains bounded. This is the case, e.g., when $m=n-k$ is fixed. 

Theorem \ref{t1.1} admits also a new derivation of the asymptotic formula for $\tau_k^{*}$, obtained in  \cite[Theorem 1.8]{NNS2018}. The new approach is simpler, without using deep results about the ordinates of the Bessel function, and allows to better analyze the sharpness of our estimates. 

\begin{theorem}\label{t1.4}
The quantity $\tau_k^{*}$ admits the representation
\begin{equation}\label{e1.7}
\tau_k^*= A\,\Big({2\over e}\Big)^{k+1/2} e^{-a k^{1/3}}k^{-1/6} \Big(1 + O\big(k^{-1/6}\big) \Big)\, ,
\end{equation}
where  $\displaystyle{A= \left(\int_0^{\infty} e^{-\frac{x^3}{3}-2ax}\,\frac{dx}{\sqrt{\pi x}} \right)^{-1/2}}\approx 1.3951~$ and $~a=2^{-1/3}|i_1|\approx 1.8558\,$ with $i_1$ -- the first zero of the Airy function.
\end{theorem}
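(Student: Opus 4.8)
The plan is to start from the exact formula \eqref{e1.3} in Theorem~\ref{t1.1} and pass to the limit $n\to\infty$ with $k$ fixed, then extract the asymptotics in $k$. The first step is to identify the limiting behaviour of each factor in \eqref{e1.3}. As $n\to\infty$ we have $\omega=\omega_{n,k}\to 1$; more precisely, writing $\omega=\cos\theta_{n,k}$, the rightmost zero of $T_n^{(k+1)}$ behaves like $\theta_{n,k}\sim j_{k}/n$ (or the appropriate Bessel-type scaling), so that $n^2(1-\omega^2)\to c_k$ for an explicit constant $c_k$ governed by the first positive zero of a Bessel function of order roughly $k$. Substituting this into the product $\prod_{j=1}^m (n^2-j^2)^{-1}\sim n^{-2m}$ and into $(1-\omega^2)^{-m}\sim (n^2/c_k)^m$, one finds that in $S_{n,k+1}(\omega)$ the $m$-th term tends to a finite limit: $S_{n,k+1}(\omega)\to 1+\sum_{m=1}^k \frac{(2m-1)!!}{(2m)!!}\,(k+1-m)_{2m}\,c_k^{-m}=:\Sigma_k$. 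Likewise $\binom{n+k}{n-k}\sim n^{2k}/(2k)!$ and $(1-\omega^2)^k\sim (c_k/n^2)^k$ and $n/(n-k)\to 1$, so the powers of $n$ cancel and \eqref{e1.3} yields a closed expression
$$
(\tau_k^*)^2=\frac{(2k-1)!!}{(2k)!!}\,\frac{(2k)!}{c_k^{k}\,\Sigma_k}\,=\,\frac{[(2k-1)!!]^2}{c_k^{k}\,\Sigma_k}\,.
$$
The second step is to recognize $c_k^{k}\,\Sigma_k$ as a recognizable special value. I expect that the sum $\Sigma_k$ telescopes or can be summed in closed form using the Pochhammer identities for $(k+1-m)_{2m}$, and that $c_k$ is exactly the square of the first positive zero of $J_{k-1/2}$ (equivalently tied to the first zero of a Bessel or Airy-type function), so that $c_k^k\Sigma_k$ becomes a single Bessel-function evaluation. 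This is the analytic heart of the argument and where the earlier paper \cite{NNS2018} had to invoke deep results on Bessel ordinates; here the claim is that \eqref{e1.3} makes the identity transparent, essentially because $S_{n,k+1}$ is (up to normalization) a Gaussian quadrature weight, which by Petras's formula has a clean product/Bessel structure in the limit.

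The third step is the asymptotic analysis in $k$. Once $(\tau_k^*)^2$ is written in terms of the first zero of a Bessel function of large order $k$ and the associated normalization, I would insert the classical large-order expansion: the first positive zero of $J_\nu$ is $\nu + a\,\nu^{1/3} + O(\nu^{-1/3})$ with $a=2^{-1/3}|i_1|$ and $i_1$ the first (negative) zero of the Airy function, and the corresponding value of $J_\nu'$ at that zero has the well-known Airy-type asymptotics involving $\operatorname{Ai}'(i_1)$. Combining these via Stirling's formula for the $(2k-1)!!$ and $(2k)!!$ factors (which contribute the $(2/e)^{k+1/2}$ term and a $k^{1/6}$-type power), and matching the Airy normalization constant to the stated integral $A=\big(\int_0^\infty e^{-x^3/3-2ax}\,dx/\sqrt{\pi x}\big)^{-1/2}$ through the integral representation $\operatorname{Ai}(t)=\frac{1}{\pi}\int_0^\infty\cos(\frac{s^3}{3}+ts)\,ds$ (after a contour rotation that turns the oscillatory integral into the decaying one displayed), yields \eqref{e1.7} with the error term $1+O(k^{-1/6})$.

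The main obstacle I anticipate is the second step: establishing the precise closed form of $c_k^k\,\Sigma_k$ and, correspondingly, pinning down the constant $A$ exactly rather than up to an unidentified factor. The power-counting in $n$ and the large-$k$ Stirling estimates are routine, and the uniform large-order Bessel/Airy expansions are standard (e.g. from Olver), but the algebraic identification of the limiting sum $\Sigma_k$ — and verifying it is consistent with the Airy integral normalization — is delicate and is really the payoff of having the exact formula \eqref{e1.3}. A secondary technical point is controlling the $O(k^{-1/6})$ error uniformly: one must track the next-order term in the zero of $J_k$ and in $J_k'$ at that zero, and confirm the $\theta_{n,k}$-to-$n$ relation is sharp enough that no intermediate error swamps the $k^{-1/6}$ scale. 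I would handle this by working throughout with the uniform Airy-type asymptotics, which carry their own error bounds, rather than with the classical (non-uniform) McMahon-type expansions.
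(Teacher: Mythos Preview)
Your first step --- passing to the limit $n\to\infty$ in \eqref{e1.3} and arriving at
\[
(\tau_k^*)^2=\frac{[(2k-1)!!]^2}{c_k^{\,k}\,\Sigma_k},\qquad c_k=j_{\nu,1}^2,\quad \nu=k+\tfrac12,
\]
(note: the correct Bessel index is $k+\tfrac12$, not $k-\tfrac12$) --- is exactly what the paper does, and your handling of the Stirling/Bessel-zero factor $(2k-1)!!/j_{\nu,1}^{k}$ matches the paper's computation of its $L_1$.

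Where you diverge is the treatment of $\Sigma_k$. You propose to recognise $\Sigma_k$ as a closed-form Bessel evaluation and then feed in the uniform Airy asymptotics of $J_\nu$ and $J_\nu'$ at the first zero; you even flag this identification as the ``main obstacle''. That route is viable in principle, but it is precisely the use of ``deep results about the ordinates of the Bessel function'' that the present proof is designed to bypass. The paper never finds a closed form for $\Sigma_k$. Instead it treats the finite sum
\[
S_{k+1}^*=\sum_{m=0}^{k} a_m\,q^{2m},\qquad a_m=\frac{(2m-1)!!}{(2m)!!}\,\frac{(k+m)!}{(k-m)!}\,(k+1)^{-2m},\quad q=\frac{k+1}{j_{\nu,1}},
\]
directly: after a Stirling expansion of $a_m$ giving $a_m\approx \frac{1}{\sqrt{\pi m}}\exp\{-m^3/3\kappa^2\}$ and $q^{2m}\approx\exp\{-2am/\kappa^{2/3}\}$, the sum is split at $m_1\sim\kappa^{1/3}$ and $m_2\sim\kappa^{2/3}\log\kappa$; the outer pieces are $O(k^{1/6})$ and $O(1)$, while the central piece is recognised as a Riemann sum (step $h=\kappa^{-2/3}$) for
\[
\frac{\kappa^{1/3}}{\sqrt{\pi}}\int_0^\infty e^{-x^3/3-2ax}\,\frac{dx}{\sqrt{x}} = A^{-2}\,\kappa^{1/3}.
\]
Thus the integral defining $A$ appears \emph{as the limit of the sum itself}, not via an Airy identity, and no information about $J_\nu'(j_{\nu,1})$ or $\mathrm{Ai}'(i_1)$ is ever needed. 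Your proposal, as it stands, leaves the decisive step (the evaluation of $\Sigma_k$) unexecuted and points toward machinery the paper deliberately avoids; the paper's Riemann-sum argument is both the missing idea and the reason the claimed simplification is genuine.
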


The rest of the paper is organized as follows. In Section 2 we prove Theorem~\ref{t1.1}. Theorems \ref{t1.2} and \ref{t1.2'} are proven in Section 3, and the proof of Theorem~\ref{t1.4} is given in Section~4. Section 5 contains comments and concluding remarks.
%=================================================
\section{Proof of Theorem \ref{t1.1}}
The derivatives of $T_n$ are expressed by ultraspherical polynomials, namely,
\begin{equation}\label{e2.1}
T_n^{(k)}=n\,2^{k-1}(k-1)!\,P_{n-k}^{(k)}\,, \quad k=1,\ldots,n\,.
\end{equation}
Here, $P_{m}^{(\lambda)}$ is the usual notation for the $m$-th ultraspherical polynomial, which is orthogonal in $[-1,1]$ with respect to the weight function $w_{\lambda}(x)=(1-x^2)^{\lambda-1/2}$, $\lambda>-1/2$. Well known properties of ultraspherical polynomials are
\begin{equation}\label{e2.2}
(1-x^2)y^{\prime\prime}-(2\lambda+1)xy^{\prime}+m(m+2\lambda)y=0,\quad y=P_m^{(\lambda)}\,,
\end{equation}
\begin{equation}\label{e2.3}
\frac{d}{dx}\big\{P_m^{(\lambda)}(x)\big\}=2\lambda\,P_{m-1}^{(\lambda+1)}(x)\,,
\end{equation}
\begin{equation}\label{e2.4}
P_m^{(\lambda)}(1)={m+2\lambda-1\choose m}\,.
\end{equation}
From \eqref{e2.1} and \eqref{e2.4} it follows that
\begin{equation}\label{e2.5}
\tau_{n,k}^2=\frac{\big[P_{n-k}^{(k)}(\o)\big]^2}{\big[P_{n-k}^{(k)}(1)\big]^2}=\frac{\Gamma^2(2k)\,\Gamma^2(n+1-k)}{\Gamma^2(n+k)}\,\big[P_{n-k}^{(k)}(\o)\big]^2\,,
\end{equation}
where $\o$ is the largest zero of $P_{n-k-1}^{(k+1)}$. 

An explicit representation of $\big[P_{n-k}^{(k)}(\o)\big]^2$ follows from a result of Knut Petras \cite{KP1996}, where the author has found an asymptotic expansion for the coefficients of the Gaussian quadrature formulae associated with the ultraspherical weight function $w_{\lambda}(x)=(1-x^2)^{\lambda-1/2}$. In the case $\lambda\in \mathbb{N}_0$ Petras has proved that
the coefficients $a_{\nu,n}^{(\lambda)}$ of the $n$-point Gaussian quadrature formula $Q_{n}^{(\lambda)G}$,  
$$
Q_{n}^{(\l)G}[f]=\sum_{\nu=1}^{n}a_{\nu,n}^{(\l)}\,f(x_{\nu,n}^{(\l)})\,,
$$
where $x_{\nu,n}=x_{\nu,n}^{(\l)}$ are the zeros of $P_n^{(\lambda)}$, admit the representation
\begin{equation}\label{e2.6}
a_{\nu,n}^{(\l)}=\frac{\pi}{n+\lambda}(1-x_{\nu,n}^2)^{\l}\,\Bigg(1+\sum_{m=1}^{\lambda-1}
\frac{\alpha_{m}(\lambda)}{(1-x_{\nu,n}^2)^m}\,\prod_{j=1}^{m}\frac{1}{(n+\l)^2-j^2}\Bigg),
\end{equation}
where
$$
\alpha_{m}(\lambda)=\Bigg(\frac{(2m)!}{2^mm!}\Bigg)^2{m+\l-1\choose 2m}\,.
$$
On the other hand, the weights $a_{\nu,n}^{(\l)}$ obey the representation (cf. \cite[eqn. (15.3.2)]{GS1975})
$$
a_{\nu,n}^{(\l)}=\frac{2^{2-2\l}\pi\,\Gamma(n+2\l)}{\Gamma^2(\l)\,\Gamma(n+1)}\,\frac{1}{(1-x_\nu^2)\big[P_n^{(\l)\prime}(x_{\nu})\big]^2}\,,
$$
If $\l>1$, then, by \eqref{e2.3}, $\,P_n^{(\l)\prime}(x)=y^{\prime\prime}(x)/(2\l-2)$, where $y=P_{n+1}^{(\l-1)}$. By \eqref{e2.2},  $(1-x^2)y^{\prime\prime}(x)=-(n+1)(n+2\l-1)y(x)$  at the zeros of $y^{\prime}=2(\l-1)P_n^{(\l)}$, therefore the above formula can be rewritten as
\begin{equation}\label{e2.7}
a_{\nu,n}^{(\l)}=\frac{2^{4-2\l}\pi\,\Gamma(n+2\l-1)}{(n+1)(n+2\l-1)\,\Gamma^2(\l-1)\,\Gamma(n+2)}\,\frac{1-x_{\nu}^2}{\big[P_{n+1}^{(\l-1)}(x_{\nu})\big]^2}\,,\quad \l>1.
\end{equation}
By equating the right-hand sides of \eqref{e2.6} and \eqref{e2.7} and then substituting $\l=k+1$ and replacing $n$ by $n-k-1$, where $1\leq k<n-1$, we obtain 
$$
\frac{\pi}{n}(1-x_{\nu}^2)^{k+1}\,S_{n,k+1}(x_{\nu})=
\frac{2^{2-2k}\pi\,\Gamma(n+k)}{(n^2-k^2)\Gamma^2(k)\Gamma(n-k+1)}\,\frac{1-x_{\nu}^2}
{\big[P_{n-k}^{(k)}(x_{\nu})\big]^2}\,.
$$
In particular, this last equality holds true when $x_{\nu}$ is the largest zero of $P_{n-k-1}^{(k+1)}$, i.e.,  $x_{\nu}=\o$. Therefore,
$$
\big[P_{n-k}^{(k)}(\o)\big]^2 =
\frac{2^{2-2k}n\,\Gamma(n+k)}{(n^2-k^2)\Gamma^2(k)\Gamma(n-k+1)}\,\frac{1}
{(1-\o^2)^{k}\,S_{n,k+1}(\o)}\,.
$$
By putting this expression in \eqref{e2.5} we obtain \eqref{e1.3}.\qed
%=================================================
\section{Proof of Theorems \ref{t1.2} and \ref{t1.2'}}
Recall that the ${}_3F_2$ hypergeometric function is defined by the series 
$$
_3F_2(a,b,c;d,e;x)=1+\sum_{m=1}^{\infty}\frac{(a)_m(b)_m(c)_m}{(d)_m(e)_m}\,\frac{x^m}{m!} \,.
$$
Generally, it is assumed that $\,d,\,e$ are not negative integers or zero, but exceptions are allowed when some of parameters $a, b, c$ is a negative integer, in which case the series terminates. 
This is the situation with the finite sums  $\,S_{m,k}(0)\,$ and $\,S_{m,k+1}(0)\,$, where $m\in\mathbb{N}$, $m> k+1$, namely, we have
\begin{eqnarray*}
&&S_{m,k}(0)={ _3}F_2\big(k,1-k,\frac{1}{2};1+m,1-m;1\big)\,,\\
&&S_{m,k+1}(0)={  _3}F_2\big(k+1,-k,\frac{1}{2};1+m,1-m;1\big)\,.
\end{eqnarray*}
A closed type formula for such ${ _3}F_2$ expressions provides the Whipple identity (see \cite[p. 189, eqn. (7)]{AE1953})
$$
_3F_2\big(a,1-a,c;d,2c+1-d;1\big)=\frac{2^{1-2c}\pi\Gamma(d)\Gamma(2c+1-d)} {\Gamma\big(\frac{a+d}{2}\big)\Gamma\big(\frac{a+1+2c-d}{2}\big)}\,\frac{1}{\Gamma\big(\frac{1-a+d}{2}\big)\Gamma\big(\frac{2+2c-a-d}{2}\big)}\,.
$$
By using Whipple's identity and familiar properties of the Gamma function (considering separately the cases of even and odd $m-k$), we find that, under the assumption $m>k+1$, 
\begin{eqnarray}
&&S_{m,k}(0)=m\,\frac{(m+k-2)(m+k-4)\cdots(m-k+2)}{(m+k-1)(m+k-3)\cdots(m-k+1)}\,,
\label{e3.1}\\
&&S_{m,k+1}(0)=m\,\frac{(m+k-1)(m+k-3)\cdots(m-k+1)}{(m+k)(m+k-2)\cdots(m-k)}\,.
\label{e3.2}
\end{eqnarray}

For the proof of Theorems~\ref{t1.2} and \ref{t1.2'} we need the following simple lemma.
\begin{lemma}\label{l3.1}
Let the polynomials $\,P(x)=\sum_{m=0}^{k}a_m x^m\,$ and $\,Q(x)=\sum_{m=0}^{k}b_m x^m\,$ have positive coefficients (with $a_k$ allowed to  be zero). If the sequence $\,\{\frac{a_m}{b_m}\}_{m=0}^{k}\,$ is monotonically increasing (resp. decreasing), then $\,R(x)=\frac{P(x)}{Q(x)}\,$ is strictly monotonically increasing (resp. decreasing) in $[0,\infty)$\,.
\end{lemma}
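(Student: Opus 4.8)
The plan is to prove Lemma~\ref{l3.1} by a direct computation of the derivative of $R$, reducing the monotonicity claim to the sign of a single bilinear combination of the coefficients, and then recognizing that combination as a positive sum of terms each controlled by the hypothesis on $\{a_m/b_m\}$. First I would write $R'(x) = \frac{P'(x)Q(x) - P(x)Q'(x)}{Q(x)^2}$; since $Q(x)>0$ on $[0,\infty)$ (all $b_m>0$ and not all zero), the sign of $R'$ equals the sign of the numerator $N(x):=P'(x)Q(x)-P(x)Q'(x)$. Expanding, $N(x) = \sum_{i,j} (i-j)\, a_i b_j\, x^{i+j-1}$, and the goal is to show $N(x)>0$ for all $x>0$ (in the increasing case).

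Next I would symmetrize the double sum. Grouping the $(i,j)$ and $(j,i)$ terms, one gets
\begin{equation*}
N(x) = \sum_{0\le j<i\le k} (i-j)\,\big(a_i b_j - a_j b_i\big)\, x^{i+j-1}\,.
\end{equation*}
Now $a_i b_j - a_j b_i = b_i b_j\big(\tfrac{a_i}{b_i} - \tfrac{a_j}{b_j}\big)$, which is $\ge 0$ for $i>j$ precisely because $\{a_m/b_m\}$ is monotonically increasing; it is strictly positive for at least one pair (unless the ratio is constant, which is excluded by ``monotonically increasing''), and the factors $(i-j)>0$, $b_i b_j>0$, $x^{i+j-1}>0$ are all positive for $x>0$. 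Hence $N(x)>0$ on $(0,\infty)$, so $R$ is strictly increasing there; continuity extends this to $[0,\infty)$. The decreasing case is identical with all inequalities reversed. One small bookkeeping point: if $a_k=0$ the top-degree terms of $P$ drop out, but the argument is unaffected since we never divided by any $a_m$; similarly the hypothesis ``positive coefficients'' for $Q$ guarantees every $b_i>0$ so the rewriting $a_i b_j-a_j b_i=b_ib_j(\cdots)$ is legitimate.

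I do not anticipate a genuine obstacle here — this is the standard ``ratio of polynomials with monotone coefficient ratios is monotone'' argument. The only place requiring a little care is the precise meaning of ``strictly'': from monotone (not necessarily strictly monotone) ratios one still gets strict monotonicity of $R$ as long as the ratio sequence is not constant, and ``increasing''/``decreasing'' in the statement should be read in that non-degenerate sense (if $\{a_m/b_m\}$ were constant, $R$ would be constant, not strictly monotone). I would state this caveat in one sentence and then present the three-line computation above.
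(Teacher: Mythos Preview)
Your proof is correct and follows essentially the same approach as the paper: both compute the numerator of $R'(x)$, pair the $(i,j)$ and $(j,i)$ terms, and observe that each resulting summand $(i-j)(a_ib_j-a_jb_i)x^{i+j-1}$ has the sign dictated by the monotonicity of $\{a_m/b_m\}$. The paper merely reorganizes this same double sum by total degree $s=i+j$, writing the coefficient of $x^{s-1}$ explicitly, but the content is identical; your presentation is in fact a bit cleaner.
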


\begin{proof}
A straightforward calculation shows that $\,R^{\prime}(x)=r(x)/Q^2(x)$, where 
\begin{equation*}
\begin{split}
r(x)=&\sum_{s=1}^{k}\Bigg(\sum_{m=0}^{\lfloor\frac{s-1}{2}\rfloor}
(s-2m)(a_{s-m}b_m-a_m b_{s-m}\Bigg)x^{s-1}\\
&+\sum_{s=k+1}^{2k}\Bigg(\sum_{m=s-k}^{\lfloor\frac{s-1}{2}\rfloor}
(s-2m)(a_{s-m}b_m-a_m b_{s-m}\Bigg)x^{s-1}\,.
\end{split}
\end{equation*}
(Here, $\lfloor\cdot\rfloor$ stands for the integer part function.) 
We observe that if the sequence $\,a_m/b_m$, $(m=0,1,\ldots,k),\,$ is monotonically increasing (decreasing), then all the coefficients of the polynomial  $\,r(x)\,$ are positive (negative), and hence $\,R^{\prime}(x)\,$ is positive (negative) on $[0,\infty)$.
\end{proof}

\noindent
{\em Proof of Theorem~\ref{t1.2}.} In the introduction we deduced inequality \eqref{e1.4} by combining \eqref{e1.2} and Theorem~\ref{t1.1}. Now we refine the trivial estimate $S_{n,k}(\o)/S_{n,k+1}(\o)<1$. Let us set 
\begin{equation}\label{e3.3}
z=\frac{1}{1-x^2}\,,\qquad z\in [1,\infty)\,
\end{equation}
Consider the polynomials $\,P(z)=S_{n,k}(x)\,$ and $\,Q(z)=S_{n,k+1}(x)\,$, where  $\,x\in [0,1)$. The coefficients $\,\{a_m\}\,$ and  $\,\{b_m\}\,$ of $P(z)$ and $Q(z)$, respectively, are
\begin{eqnarray*}
&&a_m=\frac{(2m-1)!!}{(2m)!!}\,(k-m)_{2m}\,\prod_{j=1}^m\frac{1}{n^2-j^2}\,,\\
&&b_m=\frac{(2m-1)!!}{(2m)!!}\,(k+1-m)_{2m}\,\prod_{j=1}^m\frac{1}{n^2-j^2}\,,
\end{eqnarray*}
The sequence 
$$
\frac{a_m}{b_m}=\frac{k-m}{k+m},\qquad m=0,1,\ldots,k, 
$$
is monotonically decreasing. By Lemma~\ref{l3.1}, $\,P(z)/Q(z)\,$ is monotonically decreasing in the interval $[0,\infty)$ and therefore in $[1,\infty)$. It follows from \eqref{e3.3} and the definition of $\,P\,$ and $\,Q\,$ that $\,S_{n,k}(x)/S_{n,k+1}(x)\,$ is a monotonically decreasing function of $\,x\,$ in the interval $\,[0,1)$, hence
$$
\frac{S_{n,k}(\omega)}{S_{n,k+1}(\omega)}\leq \frac{S_{n,k}(0)}{S_{n,k+1}(0)}\,.
$$
By using \eqref{e3.1} and \eqref{e3.2} we find
$$
\sqrt{\frac{S_{n,k}(0)}{S_{n,k+1}(0)}}=\sqrt{n^2-k^2}\,\frac{(n+k-2)(n+k-4)\cdots(n-k+2)}{(n+k-1)(n+k-3)\cdots(n-k+1)}\,.
$$
and putting the last expression in the right-hand side of \eqref{e1.4},  after some simplification we arrive at inequality \eqref{e1.5}.\qed 

{\em Proof of Theorem~\ref{t1.2'}.} From Theorem~\ref{t1.1} we have
\begin{equation*}
\frac{\tau_{n,k}^2}{\tau_{n+1,k}^2}=\frac{n(n+1+k)}{(n+1)(n-k)}\, 
\frac{(1-\wt^2)^kS_{n+1,k+1}(\wt)}{(1-\omega^2)^kS_{n,k+1}(\omega)}\,,
\end{equation*}
where $\,\wt\,$ is the largest zero of $\,T_{n+1}^{(k+1)}$. Since  
$\,(1-\wt^2)^kS_{n+1,k+1}(\wt)\,$ is a polynomial in $\,1-\wt^2\,$ with positive coefficents and $0<1-\wt^2<1-\o^2$,  it follows that 
\begin{equation}\label{e3.4}
\frac{\tau_{n,k}^2}{\tau_{n+1,k}^2}\leq \frac{n(n+1+k)}{(n+1)(n-k)}\, 
\frac{(1-\omega^2)^kS_{n+1,k+1}(\omega)}{(1-\omega^2)^kS_{n,k+1}(\omega)}\,.
\end{equation}
Let us consider the polynomials in $z=1-\o^2$, $z\in (0,1]$,  
\begin{eqnarray*}
&&P(z)=z^k+\sum_{m=0}^{k-1}a_m z^m=(1-\o^2)^kS_{n+1,k+1}(\o)\,,\\ 
&&Q(z)=z^k+\sum_{m=0}^{k-1}b_m z^m=(1-\o^2)^kS_{n,k+1}(\o)\,.
\end{eqnarray*}
For $\,m=1,\ldots, k$ we have 
\begin{eqnarray*}
&&a_{k-m}=\frac{(2m-1)!!}{(2m)!!}\,(k-m+1)_{2m}\,\prod_{j=1}^{m}\frac{1}{(n+1)^2-j^2}\,,
\\
&&b_{k-m}=\frac{(2m-1)!!}{(2m)!!}\,(k-m+1)_{2m}\,\prod_{j=1}^{m}\frac{1}{n^2-j^2}\,, 
\end{eqnarray*}
therefore
$$
\frac{a_{k-m}}{b_{k-m}}=\frac{n^2-m^2}{(n+1)^2-m^2}\,\frac{a_{k+1-m}}{b_{k+1-m}}<\frac{a_{k+1-m}}{b_{k+1-m}}\,,\quad m=1,\ldots, k. 
$$
Hence, the sequence $\{a_m/b_m\}$, $m=0,1,\ldots,k$, is monotonically increasing, and  Lemma~\ref{l3.1} implies that $\frac{P(x)}{Q(x)}$ increases monotonically in $(0,\infty)$, in particular,  
\begin{equation}\label{e3.5}
\frac{(1-\omega^2)^kS_{n+1,k+1}(\omega)}{(1-\omega^2)^kS_{n,k+1}(\omega)}=
\frac{P(z)}{Q(z)}\leq\frac{P(1)}{Q(1)}= \frac{S_{n+1,k+1}(0)}{S_{n,k+1}(0)}\,.
\end{equation}
From \eqref{e3.2} we find
\begin{equation}\label{e3.6}
 \frac{S_{n+1,k+1}(0)}{S_{n,k+1}(0)}=\frac{(n+1)(n-k)}{n(n+k+1)}\,\frac{(n+k)^2(n+k-2)^2\ldots(n-k+2)^2}{(n+k-1)^2(n+k-3)^2\ldots(n-k+1)^2}\,.
\end{equation}
The claim of Theorem~\ref{t1.2'} now follows from \eqref{e3.4}, \eqref{e3.5} and \eqref{e3.6}. \qed

%=================================================
\section{Proof of Theorem \ref{t1.4}} 
In view of \eqref{e1.3} we have $(\tau_k^*)^2= L_1/L_2$, where 
$$ 
L_1= \lim_{n\to\infty} \frac{n(n-k-1)!(2k-1)!!^2}{(n+k)!(1-\omega^2)^k}\quad \text{ ~and~}\quad L_2= \lim_{n\to\infty}S_{n,k+1}(\omega)\,. 
$$

We will use the following result from \cite{GS1975}, (see \S 8.9 or Theorem~8.21.12).

Let $\a > -1$ and $\b$ be an arbitrary real number. Then, for the $r$-th zero of $P^{(\a,\b)}_n(\cos\th)$, where $P^{(\a,\b)}_n(x)$ is the Jacobi polynomial, it holds the limit relation
$$
\th_r= n^{-1}\big(j_{\a,r} + \ep_n), \quad \ep_n\to 0 {\rm ~~ for ~~} n\to\i ,$$
where $j_{\a,r}$ is the $r$-th positive zero of the Bessel function $J_\a(x)$.

Since $\o$ is the largest zero of $T_n^{(k+1)}(x)= C_{n,k} P_{n-k-1}^{(\nu,\nu)}(x)$ with $\nu= k+1/2$, then 
$$ 
\o= \cos\th_1= \cos{j_{\nu,1}+\ep_{n-k-1} \over n-k-1} = 1 - {j_{\nu,1}^2 \over 2n^2}(1 + \ep'_n), 
$$
where $\ep'_n= \ep'(n,k)$ tends to $0$ as $n\to\i$ and $k$ is fixed. Equivalently, we have
\begin{equation}\label{e5.1}
1-\o^2 = \Big({j_{\nu,1}\over n}\Big)^2(1 + \d_n), \qquad \d_n\to 0 \quad \text{ as } 
\quad n\to\i .  
\end{equation}
For $L_1$ we obtain
\begin{equation*}
\begin{split}
L_1&= \lim_{n\to\i} {(2k-1)!!^2\over (n^2-1^2)(n^2-2^2) \cdots (n^2-k^2) (1-\o^2)^k}\\
&= \lim_{n\to\i} {(2k-1)!!^2\over (1-1^2/n^2)(1-2^2/n^2) \cdots (1-k^2/n^2) j_{\nu,1}^{2k}(1+\d_n)^k}\,,
\end{split}
\end{equation*}
hence
\begin{equation}\label{e5.2}
L_1 = {(2k-1)!!^2\over j_{\nu,1}^{2k}} \,.
\end{equation}
For $L_2=\lim_{n\to\i} S_{n,\kappa}(\o)$,  $\kappa=k+1$,  we have
\begin{equation*}
\begin{split}
L_2& = \lim_{n\to\i} \sum_{m=0}^k {(2m-1)!!\over (2m)!!}\,{\kappa\over \kappa+m}\,{(\kappa^2-1^2)(\kappa^2-2^2) \cdots (\kappa^2-m^2) \over (n^2-1^2)(n^2-2^2) \cdots (n^2-m^2)} (1-\o^2)^{-m}\\
&= \lim_{n\to\i} \sum_{m=0}^k {(2m\!-\!1)!!\over (2m)!!}\,{(\kappa)^{2m+1}\over \kappa+m}\,
\Big(\prod_{\ell=1}^{m}\frac{1-\ell^2/\kappa^2}{1-\ell^2/n^2}\Big)\, j_{\nu,1}^{-2m}
(1+\d_n)^{-m} 
\end{split}
\end{equation*}
therefore
\begin{equation*}
L_2= \sum_{m=0}^k {(2m-1)!!\over (2m)!!}\,{\kappa\over \kappa+m}\, \Big(\prod_{\ell=1}^m \big(1-{\ell^2/\kappa^2}\big)\Big)\,\Big({j_{\nu,1}\over \kappa}\Big)^{-2m}\,,
\end{equation*}
hence
\begin{eqnarray}\label{e5.3}
&&L_2= \sum_{m=0}^k a_m\,q^{2m}=:S_{k+1}^{*} \label{e5.3}\\
&&a_m= a_{m,k}= {(2m-1)!!\over (2m)!!}\,{(k+m)!\over(k-m)!}(k+1)^{-2m},\quad q=\Big({j_{\nu,1}\over k+1}\Big)^{-1}\nonumber
\end{eqnarray}

Combining \eqref{e5.2} and \eqref{e5.3} we obtain 
\begin{equation}\label{e5.4}
\tau_k^*= {(2k-1)!!\over j_{\nu,1}^{k}\,\sqrt{S^*_{k+1}}}\,,
\qquad \nu= k+{1\over2} \, . 
\end{equation}

Notice that  $\displaystyle{J_{k+{1\over2}}(z)= \sqrt{2\over\pi} z^{k+{1\over2}}\Big(-{1\over z}{d\over dz} \Big)^k {\sin z\over z}}$, therefore $\;j_{\nu,1}$ is a zero of an elementary function.

To estimate the factor $\displaystyle{{(2k-1)!!\over j_{\nu,1}^{k}\,}}$ in \eqref{e5.4} we use the Stirling  approximation $\displaystyle{N!=  \sqrt{2\pi N}\Big({N\over e}\Big)^N\Big(1 + O\big(N^{-1}\big)\Big)}$ and (see, e.g., \cite{QW1999})
$$ j_{\nu,1}= \nu + a\nu^{1/3} + {3a^2\over 10}\nu^{-1/3} + O\big(\nu^{-1}\big) , \quad \nu>0. $$
Then,
\begin{equation*}
\begin{split}
{(2k-1)!!\over j_{\nu,1}^{k}} &= {(2k)!\over (2k)!!}\nu^{-k}\Big[1 + a\nu^{-2/3} +  O\big(\nu^{-4/3}\big)\Big]^{-k} \\
&= 2^{-k}{(2k)!\over k!}\Big(k + {1\over2}\Big)^{-k}\exp\left\{-k\log\Big[1 + a k^{-2/3} +  O\big(k^{-4/3}\big)\Big]\right\} \\
&= 2^{-k}\sqrt2{(2k/e)^{2k}\over (k/e)^k}\Big(1\! +\! O\big(k^{-1}\big)\Big) k^{-k}\Big(1\! +\! {1\over2k}\Big)^{-k}\!\exp\left\{-a k^{1/3}\! +\!  O\big(k^{-1/3}\big)\right\} \\
&= \sqrt2(2/e)^{k}e^{-1/2}\Big(1+O\big(k^{-1}\big)\Big) e^{-a k^{1/3}}\Big(1 + O\big(k^{-1/3}\big)\Big)\,,
\end{split} 
\end{equation*}
hence
\begin{equation}\label{e5.5}
{(2k-1)!!\over j_{\nu,1}^{k}}= \Big({2\over e}\Big)^{k+1/2} e^{-a k^{1/3}}\Big(1 + O\big(k^{-1/3}\big)\Big)\,. 
\end{equation}

The approximation of $S^*_{k+1}$ in the denominator needs more care. We start with the coefficients $a_m$ in \eqref{e5.3}. We shall use Stirling's formula in the form
$$
\log(N-1)!= \Big(N-{1\over2}\Big)\log N - N + {1\over2}\log2\pi + O\big(N^{-1}\big).
$$
With $\kappa=k+1$, we have
\begin{equation*}
\begin{split}
{(2m)!!\,a_m\over(2m\!-\!1)!!}\m =&\kappa^{-2m}\! \exp\Bigg\{\!\Big[\Big(\kappa\! +\! m\! -\! {1\over2}\Big)\log(\kappa\!+\!m)\! -\! (\kappa\!+\!m)\! +\! {1\over2}\log2\pi\! +\!  O\Big({1\over \kappa\!+\!m} \!\Big)\!\Big]\\
&\qquad\qquad\; - \Big[\Big(\kappa\! -\! m\! -\! {1\over2}\Big)\log(\kappa\!-\!m)\! -\! (\kappa\!-\!m) \!+\! {1\over2}\log2\pi\! +\!  O\Big({1\over \kappa\!-\!m}\! \Big)\!\Big]\!\Bigg\} 
\end{split}
\end{equation*}
\begin{equation*}
\begin{split}
= &\kappa^{-2m}\! \exp\Big\{\!\log \kappa\Big[\!\Big(\!\kappa\! +\! m\! -\! {1\over2}\!\Big)\! -\! \Big(\!\kappa\! -\! m\! -\! {1\over2}\!\Big)\!\Big]\!  +\! \Big(\!\kappa\! +\! m\! -\! {1\over2}\Big)\log\Big(\!1\! +\! {m\over \kappa}\Big)\\
&\qquad\qquad\ \;-\Big(\!\kappa\!-\! m\! -\! {1\over2}\!\Big)\log\Big(1\! -\! {m\over \kappa}\!\Big)
\! -\! 2m \!+\!  O\Big({1\over \kappa\!-\!m}\Big)\!\Big\} \\
=& \exp\Big\{\!\Big(\!\kappa\! +\! m\! - \!{1\over2}\!\Big)\!\sum_{j=1}^\i\! {(\!-\!1)^{j\!-\!1}\over j} \Big(\!{m\over \kappa}\!\Big)^j \!+\! \Big(\kappa\!-\! m\! -\! {1\over2}\!\Big)\!\sum_{j=1}^\i \!{1\over j} \Big({m\over \kappa}\Big)^j\! - \!2m\! +\!  O\Big(\!{1\over \kappa\!-\!m}\!\Big)\!\Big\} \\
=& \exp\Bigg\{\!\sum_{odd ~j>0} {(2\kappa\!-\!1)\over j} \Big({m\over \kappa}\Big)^j\! -\! \sum_{even ~j>0} {(2m)\over j} \Big({m\over \kappa}\Big)^j\! -\! 2m\! +\!  O\Big({1\over \kappa\!-\!m}\Big)\!\Bigg\} \\
=& \exp\Bigg\{\!\sum_{ even \;i\geq\!0} {2m\over \!i+\!1} \Big(\!{m\over \kappa}\!\Big)^i 
\!-\!{1\over2}\log{1\! +\! {m\over\kappa}\over1 \!- \!{m\over\kappa}}\! -\!\! \sum_{even \;j\!>\!0}\!\! {2m\over j} \Big({\!m\over \kappa}\!\Big)^j \!-\! 2m\! +\!  O\Big({1\over \kappa\!-\!m}\Big)\!\Bigg\} \\
=& \sqrt{\kappa-m\over \kappa+m} \exp\left\{-2m\sum_{even ~j>0} \Big({1\over j} - {1\over j+1}\Big) \Big({m\over \kappa}\Big)^j +  O\Big({1\over \kappa-m}\Big)\right\} \\
=& \sqrt{\kappa-m\over \kappa+m} \exp\left\{-{m^3\over 3\kappa^2} - O\Big({m^5\over \kappa^4}\Big) + O\Big({1\over \kappa-m}\Big)\right\}\,.
\end{split}
\end{equation*}

It is important that the remainder denoted by $\displaystyle{- O\Big({m^5\over \kappa^4}\Big)}$ is negative. The same holds true for the other "O" term, but we will not use this fact.

Next, for the ratio $q$ in \eqref{e5.3} we have
\begin{equation*}
\begin{split}
q &= \Big({j_{\nu,1}\over \kappa}\Big)^{-1}= \Big({\nu\! +\! a\, \nu^{1/3}\! + \!  O\big(\nu^{-1/3}\big) \over \nu + 1/2}\Big)^{-1}= \Big(1\! +\! {1\over2\nu}\Big) \Big(1\! +\! a\, \nu^{-2/3}\! +\! O\big(\nu^{-4/3}\big)\Big)^{-1} \\
&= \Big(1 + {1\over2\nu}\Big) \Big(1 - a\, \nu^{-2/3} + O\big(\nu^{-4/3}\big)\Big) = 1 - a\, \nu^{-2/3} + {1\over2\nu} + O\big(\nu^{-4/3}\big) \\
 &= 1 - a\,\kappa^{-2/3} + O\big(\kappa^{-1}\big) , \quad \nu\geq 1/2 \,. 
 \end{split}
\end{equation*}

From this it is clear that for sufficiently large $k$ (respectively $\kappa$ and $\nu$) we have $q\in (0,1)$. Moreover, the same holds for $\nu\geq 1/2$, as can be seen from the results in \cite{QW1999}.

Now, we split the sum $\displaystyle{S_{k+1}^*=\sum_{m=0}^k a_m\, q^{2m}}$ into three parts
$$ 
S_{k+1}^*=\sum_{m=0}^{m_1}(\cdot) + \sum_{m=m'_1}^{m_2}(\cdot) + \sum_{m=m'_2}^{k}(\cdot) =: S_{\kappa}^{(1)} + S_{\kappa}^{(2)} + S_{\kappa}^{(3)}\,,
$$
where $m_1= \lfloor \kappa^{1/3}\rfloor$, $m_2= \lfloor A_k \kappa^{2/3}\rfloor$ with $A_k= \log \kappa$ and $m'_i= m_i + 1$.

Note that without loss of generality we may assume that $k$ is sufficiently large so that the three sums above are non-empty. For small $k$ the assertion of the theorem is fulfilled on account of the choice of the constant in "O".\medskip

We estimate $S_{\kappa}^{(1)}$ from above  by  using  $\displaystyle{ {(2m-1)!!\over(2m)!!}= {1\over \sqrt{\pi m}}\Big(1+O\big(m^{-1}\big)\Big)}$.
\begin{eqnarray*} 
S_{\kappa}^{(1)} &<& \sum_{m=0}^{m_1} a_{m,k} < 1 + \sum_{m=1}^{m_1} {(2m-1)!!\over (2m)!!} \exp\left\{ O\Big({1\over \kappa-m}\Big)\right\} \\
   &=& 1 + \sum_{m=1}^{m_1} O\Big({1\over\sqrt{m}}\Big)\left(1 + O\Big({1\over \kappa}\Big)\right)= O\big( \sqrt{m_1}\big) = O\big(k^{1/6}\big) ~. 
\end{eqnarray*}

The third sum is also relatively small. Indeed,
\begin{equation*}
\begin{split} 
S_{\kappa}^{(3)} &< \sum_{m=m'_2}^{k} a_{m,k} < \sum_{m=m'_2}^{k} {(2m-1)!!\over (2m)!!}\exp\left\{ -{m^3\over 3\kappa^2} + O\big( 1 \big)\right\}\\
& < \sum_{m=m'_2}^{k} {C\over\sqrt{m}} \exp\left\{ -{m^3\over 3\kappa^2}\right\}\,, 
\end{split}
\end{equation*}
where $C$ is an absolute constant independent of $m$ and $k$. Hence,
$$ S_{\kappa}^{(3)} < k\,{C\over\sqrt{m_2}}\, e^{-m_2'^3/(3\kappa^2)} <  k\,{C\over\sqrt{A_k}\kappa^{1/3}}\, e^{-A_k^3/3} < {C\,k\over \kappa^{1/3}}\, e^{-2A_k/3} = C\,k/\kappa ~, $$
provided $A_k^2>2$, i.e. for $k\geq 4$. As a consequence, $~S_{\kappa}^{(3)}= O(1)$ for $k\in\N$.
\smallskip

For the main part of $S^*_{\kappa}$ we have
\begin{equation*}
\begin{split}
&S_{\kappa}^{(2)}\! =\! \sum_{m=m'_1}^{m_2} a_m q^{2m}
\!=\! \sum_{m=m'_1}^{m_2}\! {1\! +\! O\big( m^{-1}\big) \over \sqrt{\pi m}}\! \Big(1\! +\! O\Big({m\over \kappa}\Big)\Big)\\
&\qquad\qquad\qquad\qquad\quad\times \exp\left\{\! -{m^3\over 3\kappa^2}\! -\! O\Big({m^5\over \kappa^4}\Big) \!+\! O\Big({1\over \kappa}\Big)\right\} \Big(1\! -\! a (\kappa)^{-2/3}\! +\! O\big(\kappa^{-1}\big) \Big)^{2m} \\
&=\sum_{m=m'_1}^{m_2}\! {1 \over \sqrt{\pi m}} \Big(1\! +\! O\Big({A_k\over \kappa^{1/3}}\Big)\Big) \exp\left\{\! -{m^3\over 3\kappa^2}\! -\! O\Big({A_k^5\over \kappa^{2/3}}\Big)\! +\! 2m\Big[\!- a (\kappa)^{-2/3}\! +\! O\big(\kappa^{-1}\big) \Big] \right\}\\
&= \sum_{m=m'_1}^{m_2} {1 \over \sqrt{\pi m}} \Big(1 + O\Big({A_k\over \kappa^{1/3}}\Big)\Big) \exp\left\{ -{1\over3}\Big({m\over \kappa^{2/3}}\Big)^3 -  2a \Big({m\over \kappa^{2/3}}\Big) \right\}\\
&= \Big(1 + O\Big({A_k\over \kappa^{1/3}}\Big)\Big){1\over\sqrt{h\pi}} \sum_{m=m'_1}^{m_2} {h \over \sqrt{m h}} \exp\left\{ -{(m h)^3\over3} -  2a (m h) \right\} \\
&= {\kappa^{1/3}\over \sqrt\pi}\Big(1 + O\Big({A_k\over \kappa^{1/3}}\Big)\Big) \bar I_k \,,
\end{split}
\end{equation*}
where $h= \kappa^{-2/3}$ and $\bar I_k$ is an integral sum of $~\displaystyle{I_k= \int_{m_1 h}^{m_2 h} e^{-x^3/3 - 2a x}\,{dx\over\sqrt x}}\,$.

Since the distance between an integral sum of a monotone function $f(x)$ on $[a, b]$ 
with uniform mesh $x_i= a + i h$  to the integral is less than $h |f(b)-f(a)|$, we have
$\displaystyle{|I_k - \bar I_k| < {h\over \sqrt{m_1 h}} \sim {1\over \sqrt{\kappa}}\,}$.

On the other hand, 
\begin{equation*}
\begin{split}
\left|I_k - \int_0^\i e^{-x^3/3 - 2a x}\,{dx\over\sqrt x}\right| &
< \int_0^{m_1 h}{dx\over\sqrt x} + \int_{m_2 h}^\i e^{-x^3/3}\,dx \vspace*{5ex}\\
& = O\big(\sqrt{ m_1 h} + e^{ -(m_2 h)^3/3}\big) = O\big(k^{-1/6}\big) ,
\end{split}
\end{equation*}
which implies that $~\displaystyle{\bar I_k = \Big(1 +  O\big(k^{-1/6}\big)\Big) \int_0^\i e^{-x^3/3 - 2a x}\,{dx\over\sqrt x}}~$, and hence
$$ 
S_{\kappa}^{(2)}= (A^*)^{-2} \kappa^{1/3} \Big(1 +  O\big(k^{-1/6}\big)\Big) \,. 
$$

Adding to this the estimates of $S_{\kappa}^{(1)}$ and $S_{\kappa}^{(3)}$, we conclude that the same magnitude has the whole sum $S_{\kappa}^*$, which finishes the proof of the theorem in view of \eqref{e5.4} and \eqref{e5.5}. \qed
%=================================================
\section{Concluding remarks}
(1) To obtain lower bounds for $\tau_{n,k}$ from Theorem~\ref{t1.1}, sharper than the one in Corollary~\ref{c1.3}, one needs upper estimates for $S_{n,k+1}(\o)$ and $1-\o^2$. Regarding the first quantity, we point out that from considerations in \cite[\S4]{NNS2018} it follows that
$$
S_{n,k}(x)\leq \frac{(2k)!!}{(2k-1)!!}\approx\frac{\sqrt{\pi}}{2}\,\sqrt{k}\,,\quad x^2\in [0,1-k^2/n^2].
$$

By a result of Driver and Jordaan \cite{DJ2012} (see \cite{Nik2019} for some improvements),  the largest zero of $P_n^{(\l)}$,  $x_{n,n}(\l)$, satisfies  
$$
1-x_{n,n}(\l)^2\leq \frac{(2\l+1)(2\l+3)}{n(n+2\l)+2(\l+1)(2\l+1)}\,.
$$
This yields the following counterpart to the estimate  $1-\o^2\geq \Big(\frac{k+2}{n}\Big)^{2}$: 
\begin{equation*}
1-\omega^2\leq \frac{(2k+3)(2k+5)}{n^2+3k^2+12k+11}
\leq \Big(\frac{k+2}{n}\Big)^{2}\,\frac{4}{1+3\Big(\dfrac{k+2}{n}\Big)^{2}}\,. 
\end{equation*}
We observe that if $k$ is small relative to $n$, then the ratio of the upper and the lower bounds for $1-\omega^2$ is nearly $4$.
\medskip

\noindent
(2) From the proof of Theorem~\ref{t1.4} it is clear that the second exponential term in the approximation of $\tau_k^*$, which is significant, but is missing in the estimate (1.11) in \cite{NNS2018}, comes from the upper estimate for $\o$ chosen there. Actually, the method from \cite{NNS2018} can cover this term and potentially it can overestimate $\tau_{n,k}$ only by a factor $c\,k^{5/12}$ (in the area $n>O(k^{3/2})$). Therefore, the upper estimate (1.5), obtained by combining the main result of \cite{NNS2018} and the exact formula (1.3), overestimates $\tau_{n,k}$ by a factor $c\,k^{5/24}$ in the worst case ($n>>k$).

On the contrary, in the area $k\approx n$ both the estimates in \cite{NNS2018} and those obtained here are sharp with respect to the order of $k$.
\medskip

\noindent
(3) Using the results in \cite[\S5]{TD1999}, we get the asymptotic formula for the largest zero of $P_m^{(p,p)}(x)$, which holds uniformly according to the parameters in the domain 
$0< p < C\,m$ :
$$ x_1= \sqrt{1 - \t p^2} - {a\,\t p^2\over \sqrt{1 - \t p^2}^{1/3}}\,p^{-2/3} + O\big(p^{-4/3}\big) , $$
where $\displaystyle{\t p= {p\over m+p+1/2}}$ and $a$ is the same constant as in Theorem~\ref{t1.4}. Then, applying this to $\o$, in the same manner as in the proof of Theorem~\ref{t1.4} one can obtain the formula
\begin{equation}\label{e6.1}
\tau_{n,k}= A \rho_\l^{n/2} e^{-a(1-\l^2)^{1/3}k^{1/3}}\Big({1\over k^2} - {1\over n^2}\Big)^{1\over 12} \Big(1 + O\big(k^{-1/6}\big) \Big) ,
\end{equation}
where  $\displaystyle{\l=\t p={k+1/2\over n}}$ and $\displaystyle{\rho_\l= \Big({2\over 1 + \l}\Big)^{1 + \l}\Big({1 - \l\over 2}\Big)^{1 - \l} < 1}$ (cf. \cite{NNS2018}). The constant for "$O$"-term in \eqref{e6.1} does not depend on $n$ and $k$, provided $\l < 1 - \d$, i.e. when $k$ is not close to $n$.
\medskip

(4) As mentioned in the introduction, our interest in $\,\tau_{n,k}\,$ is motivated by the role it plays in certain  inequalities of Markov- and Landau-type. However, Petras' result yields an explicit representation for the local maxima of $\big[P_n^{(\l)}\big]^2$, $\,\l\in\mathbb{N}$, and therefore is applicable to the estimation of the largest critical values of the ultraspherical polynomials $P_n^{(\l)}$, 
$\,\l\in \mathbb{N}$.
%=================================================

\end{document}